\newcommand{\bob}{Builder}
\newcommand{\alice}{Algorithm}
\DeclareMathOperator{\ff}{First-Fit}
\DeclareMathOperator{\bd}{\mathbf{bd}}
\DeclareMathOperator{\ed}{\mathbf{ed}}
\DeclareMathOperator{\abcax}{\mathbf{abcax}}
\DeclareMathOperator{\aab}{\mathbf{aab}}
\DeclareMathOperator{\bab}{\mathbf{bab}}
\DeclareMathOperator{\abcde}{\mathbf{abcde}}
\DeclareMathOperator{\game}{\mathbf{gg}}
\DeclareMathOperator{\abcab}{\mathbf{abcab}}
\DeclareMathOperator{\abcac}{\mathbf{abcac}}
\DeclareMathOperator{\abcad}{\mathbf{abcad}}
\newtheorem{theorem}{Theorem}[section]
\newtheorem{lemma}[theorem]{Lemma}
\newtheorem{prop}[theorem]{Proposition}
\author[Israel R. Curbelo and Hannah R. Malko]{Israel R. Curbelo
  \and Hannah R. Malko}
\title[On the on-line coloring of unit interval graphs with proper interval representation]{On the on-line coloring of unit interval graphs with proper interval representation}
\affiliation{
  Department of Mathematical Sciences, Kean University, Union, NJ, USA}
\keywords{online algorithms, online coloring, interval graphs, graph coloring}
\begin{document}
\publicationdata{vol. 27:2}{2025}{4}{10.46298/dmtcs.14088}{2024-08-21; 2024-08-21; 2025-02-10}{2025-02-13}

\maketitle

\begin{abstract}
We define the problem as a two-player game between \alice\ and \bob. The game is played in rounds. Each round, \bob\ presents an interval that is neither contained in nor contains any previously presented interval. \alice\ immediately and irrevocably assigns the interval a color that has not been assigned to any interval intersecting it. The set of intervals form an interval representation for a unit interval graph and the colors form a proper coloring of that graph. For every positive integer $\omega$, we define the value $R(\omega)$ as the maximum number of colors for which \bob\ has a strategy that forces \alice\ to use $R(\omega)$ colors with the restriction that the unit interval graph constructed cannot contain a clique of size $\omega+1$. In 1981, Chrobak and \'{S}lusarek showed that $R(\omega)\leq2\omega -1$. In 2005, Epstein and Levy showed that $R(\omega)\geq\lfloor{3\omega/2\rfloor}$. This problem remained unsolved for $\omega\geq 3$. In 2023, Bir\'o and Curbelo showed that $R(3)=5$. In this paper, we show that $R(4)=7$
\end{abstract}

\section{Introduction}

An on-line graph coloring algorithm $A$ receives a graph $G$ in the order of its vertices $v_1,\ldots,v_n$ and constructs an on-line coloring. This means that the color assigned to a vertex $v_i$ depends solely on the subgraph induced by the vertices $\{v_1,\ldots,v_i\}$ and on the colors assigned to them. The simplest on-line coloring algorithm is the greedy algorithm, $\ff$, which traverses the list of vertices given in the order they are presented, and assigns each vertex the minimal color not assigned to its neighbors that appear before it in the list of vertices. 

The performance of an algorithm $A$ on a graph $G$ is measured with respect to the chromatic number $\chi(G)$, which is equivalent to the number of colors needed by an optimal off-line algorithm. Let $\chi_A(G)$ be the maximum number of colors needed by algorithm $A$ over all orderings of the vertices of $G$. An on-line coloring algorithm $A$ is \emph{strictly $\rho$-competitive} if for every graph $G$, $\chi_A(G)\leq\rho\cdot\chi(G)$. The \emph{strict competitive ratio} of algorithm $A$ is $\inf\{\,\rho\mid\text{$A$ is strictly $\rho$-competitive}\,\}$. In general, $\ff$ does not have a bounded competitive ratio, and there is no on-line algorithm with a constant competitive ratio. The best known algorithm by \cite{hal-97,hal-sze-94} uses $O(\chi\cdot n/\log n)$ colors for $\chi$-colorable graphs of size $n$, and no on-line coloring algorithm can be $o(n/\log^2 n)$-competitive. For this reason, it is common to restrict the class of graphs. An important class of intersection graphs that has been extensively studied in this context, with applications in scheduling, is the class of interval graphs. 

A graph $G=(V,E)$ is an \emph{interval graph} if there is a function $f$ which assigns to each vertex $v\in V$ a closed interval $f(v)$ on the real line so that for all $u,v\in G$, $u$ and $v$ are adjacent if and only if $f(u)\cap f(v)\neq\emptyset$. We refer to $f$ an \emph{interval representation} of $G$. When no interval is properly contained in any other interval, we refer to $G$ as a \emph{proper interval graph} and $f$ as a \emph{proper interval representation} of $G$. When every interval has unit length, we refer to $G$ as a \emph{unit interval graph} and $f$ as a \emph{unit interval representation} of $G$. Since every unit interval representation is a proper interval representation, any unit interval graph is a proper interval graph. Moreover, it can be shown that an interval graph $G$ is a unit interval graph if and only if $G$ is a proper interval graph. 

The on-line coloring problem for the class of interval graphs was solved by \cite{kie-tro-81} when they constructed an on-line algorithm with competitive ratio $3$, and proved that no on-line algorithm could have a better competitive ratio. \cite{chr-slu-88} proved the same result, but instead of the verticies of the graph being presented as points, the graph was presented with its interval representation. Naturally, this leads us to question if knowledge of the interval representation makes a difference in the problem. Chrobak and \'Slusarek also considered the problem restricted to unit interval graphs with known representation. They showed that $\ff$ has a competitive ratio of $2$. Since $\ff$ does not use information from the representation, this holds true for both cases. However, the question of whether a better online algorithm exists remained unanswered. History would have us believe that there should be a more efficient algorithm. The most closely related example would be the performance of $\ff$ on the class of interval graphs. \cite{woo-74} was the first to consider this problem. After extensive research on this problem as seen in \cite{kie-88,kie-qin-95,kie-tro-81,chr-slu-88,pem-ram-04,nar-sub-08,kie-smi-16}, it was found that the competitive ratio of the $\ff$ algorithm for interval graphs is between 5 and 8. This is worse than the optimal algorithm developed by \cite{kie-tro-81}, which has a competitive ratio of 3.

\cite{eps-lev-05} presented a strategy for constructing a unit/proper interval graph with representation that would force any on-line algorithm to use $3k$ colors on a graph with clique size $k$. Hence, they showed that there is no on-line algorithm with competitive ratio less than $\frac{3}{2}$. In 2008, Micek studied this problem in terms of on-line chain partitioning and showed that there is no algorithm better than $\ff$ if the graph is presented without representation. (We refer the reader to the survey paper by \cite{survey} for a comprehensive overview of these problems in terms of on-line chain partitioning and for a proof of this result.) Specifically, there is no on-line algorithm with competitive ratio better than $2$. Micek also claimed that the strategy by \cite{eps-lev-05} could be improved if a proper interval representation was used.  
\cite{bir-cur-23} showed that the algorithm could also be improved for unit intervals. This provided a tight bound of $5$ when the chromatic number is restricted to at most $3$ and showed that there is no on-line algorithm with competitive ratio less than $\frac{5}{3}$ with unit or proper representation. 
It remains unknown whether knowledge of the representation or if the choice of representation makes a difference to the result of the problem.

In this paper, we present a strategy which forces any on-line algorithm to use $7$ colors on a unit interval graph with chromatic number at most $4$ and known proper interval representation and prove the following.
\begin{theorem}\label{the:main}
    There is no on-line algorithm with strict competitive ratio less than $\frac{7}{4}$ for the on-line coloring problem restricted to unit interval graphs with known proper interval representation.
\end{theorem}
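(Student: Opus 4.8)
It suffices to establish the lower bound $R(4)\ge 7$: that is, to give \bob\ a strategy that, no matter how \alice\ plays, forces \alice\ to use $7$ colours while the unit interval graph built (presented together with a proper interval representation) never acquires a clique of size $5$. Combined with the inequality $R(\omega)\le 2\omega-1$ of Chrobak and \'Slusarek this yields $R(4)=7$. Theorem~\ref{the:main} then follows at once: letting $G$ be the graph produced by this strategy against a given on-line algorithm $A$, we have $\chi_A(G)\ge 7$ (the presentation order witnesses it), while $G$ is an interval graph and hence perfect, so $\chi(G)=\omega(G)\le 4$. Consequently, for any fixed $\rho<7/4$ no on-line algorithm can satisfy $\chi_A(H)\le\rho\,\chi(H)$ for every instance $H$, since $H=G$ already violates this; hence none has strict competitive ratio below $7/4$.

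The plan for the strategy itself is to present it as a finite adaptive decision tree, branching on the colours \alice\ returns. \bob\ keeps a bounded ``active window'' of the most recently played intervals, and the relevant state of the game is just the pattern (up to renaming colours) that \alice\ has produced on that window; this is what the configuration names $\bd,\ed,\aab,\bab,\abcax,\abcab,\abcac,\abcad,\abcde,\abcadx,\abcadbx,\abcadbex,\abcadbefx$ are designed to record, with the later names corresponding to the deeper states in which five or six colours are already on the board. Throughout, \bob\ must play so as to preserve two invariants: every new interval is neither contained in nor contains a previously presented one (so the representation stays proper), and no point of the line is ever covered more than four times (so $\omega(G)\le 4$). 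A convenient bookkeeping device is an ``on-line width'' parameter $\olw$ (with the refinements $\olws$ and $\olwi$) that lower-bounds the number of colours \bob\ can still extract from a given configuration and is strictly monotone along the tree.

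The body of the proof will then be a chain of lemmas, one per configuration, each asserting roughly: \emph{from this configuration \bob\ can, in finitely many further rounds, either force \alice\ to introduce a colour not yet used anywhere, or move the game into a strictly more advanced configuration, in either case respecting the two invariants.} Starting from the empty configuration and following the tree — playing an opening clique-like block, then repeatedly applying the configuration lemmas — \bob\ reaches a leaf at which seven distinct colours have been assigned, which is exactly what is needed. Because interval graphs satisfy $\chi=\omega$, this is genuinely a statement about forcing on-line suboptimality, not about the off-line chromatic number.

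The main difficulty is the size and tightness of this case analysis. \alice's only way to resist progress is to recycle old colours, so \bob\ needs a prepared reply in every branch; the hard branches are those in which \bob\ must ``reflect'' the partial construction and re-use the geometric room on the opposite side of the active window, since there the properness requirement and the clique bound $\omega\le 4$ bind simultaneously. We are after $7>\floor{3\cdot 4/2}=6$ colours with only one level of slack above the Epstein--Levy bound, so there is essentially no margin: one must verify, in each such branch, that a legal proper-interval move exists that neither completes a $5$-clique nor wastes the move. Managing this — maintaining, at every point of the line, an exact account of which of the four levels are occupied and by which colours — is what forces the long sequence of configuration lemmas rather than a single clean construction, and it is the step I expect to be the crux.
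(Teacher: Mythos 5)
Your framing of the reduction is correct and matches the paper's: it suffices to exhibit a strategy for \bob\ forcing $7$ colours on a unit interval graph, presented with a proper representation, whose clique number never exceeds $4$; since interval graphs are perfect, such a graph has $\chi\le 4$, and the lower bound $7/4$ on the strict competitive ratio follows (the upper bound $R(\omega)\le 2\omega-1$ is not actually needed for the theorem as stated). Your proposed architecture --- a finite adaptive decision tree over configurations, one lemma per configuration, with the two invariants (properness of the representation and $\omega\le 4$) maintained throughout --- is also exactly the architecture of the paper.

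But this is where the proposal stops, and that is where the theorem begins. None of the configurations are defined, no move of \bob\ is specified, and not a single branch of \alice's replies is verified; you explicitly defer the case analysis as ``the crux.'' For this theorem the case analysis \emph{is} the proof: there is no soft argument that substitutes for it, precisely because, as you note, $7$ exceeds the Epstein--Levy bound $\lfloor 3\cdot 4/2\rfloor=6$ and the construction has essentially no slack. Two concrete omissions make the gap unbridgeable as written. First, you invoke a monotone potential ($\olw$ and its refinements) that you never define and that does no work; the paper has no such potential, and the progress measure is simply an explicit directed acyclic graph of state transitions ($\varnothing\to\{\aab^*,\bab^*,\abcde\}\to\abcax\to\{\bd,\ed\}\to\game$), every edge of which must be, and in the paper is, checked by an explicit sequence of interval placements. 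Second, you never mention the separation strategy (the paper's Lemma~\ref{lem}, generalizing Epstein--Levy): \bob\ presents $k$ overlapping proper intervals so that \alice's answers come out sorted by membership in a chosen colour set $Y$. Without this tool the decision tree does not collapse into finitely many equivalence classes of states, and the ``prepared reply in every branch'' you promise cannot be organized. As it stands the proposal is a correct description of what a proof would look like, not a proof.
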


\section{Preliminaries}

We define the problem as a two-player game between \bob\ and \alice. The game is played in rounds. Each round, \bob\ presents an interval so that any real number contained in that interval is contained in at most three other intervals. \alice, immediately and irrevocably, assigns the interval a color from the set $\mathcal{G}=\{a, b, c, d, e, f, g\}$ with the restriction that any two intersecting intervals cannot be assigned the same color. Anytime that \alice\ uses a new color, we may assume that it is the first unused color alphabetically in $\mathcal{G}$. In this paper, we play the role of \bob, attempting to force \alice\ to use the color $g$. We restrict the domain of the game so that all intervals presented by \bob\ must be properly contained in the interval $[0,1]$. At the end of round $i$, \bob\ may further restrict the domain by choosing two real numbers $l_i$ and $r_i$ so that $l_{i-1}\leq l_i<r_i\leq r_{i-1}$ and any interval presented after round $i$ must be properly contained in $[l_i,r_i]$. We refer to $l_i$ and $r_i$ as \emph{walls}. Note that this restriction makes no difference to the outcome of the game.

\subsection{State representation}
Consider a sequence of intervals $x_1, \ldots, x_n$ presented by \bob\ and a sequence of corresponding colors $y_1, \ldots, y_n$ assigned by \alice. The set of interval-color pairs  
\[
S = \{ (x_i,y_i) \mid i \in \{1, \ldots, n\} \}
\]
defines a \emph{state}.

Let $S$ be a state as defined above. We define its \emph{state representation matrix} by 
\[
    f(S)=\begin{bmatrix}
        s_1&s_2&\ldots&s_{2n}\\
        z_1&z_2&\ldots&z_{2n}\\
    \end{bmatrix}
    %f(S)=(s_1,z_1),\ldots,(s_{2n},z_{2n})
\]
where the sequences $s_1,\ldots,s_{2n}$ and $z_1,\ldots,z_{2n}$ are obtained as follows. First, let $r_1,\ldots,r_{2n}$ be obtained by taking all $2n$ endpoints of $x_1,\ldots,x_n$ and ordering them in ascending order. Next, for $i\in\{1,\ldots,2n\}$, $s_i=0$ if $r_i$ was a left endpoint and $s_i=1$ if $r_i$ was a right endpoint. Finally, $z_i$ denotes the color in $\mathcal{G}$ that was assigned to the interval which $r_i$ belonged to. 

We consider two states equivalent if they have the same state representation matrix. Furthermore, we consider a state $T$ equivalent to $S$ if and only if there exists a permutation $\sigma:\mathcal{G}\rightarrow\mathcal{G}$ so that 
\[
    f(T)=\begin{bmatrix}
        s_1&s_2&\ldots&s_{2n}\\
        \sigma z_1&\sigma z_2&\ldots&\sigma z_{2n}\\
    \end{bmatrix}.
\]

The following statement allows us to generalize and represent states visually throughout the paper.

\begin{prop}\label{prop12}
    Let $S_1$ and $S_2$ be states. If $S_1$ is equivalent to $S_2$ and there is a winning strategy for \bob\ starting at state $S_1$, then there is a winning strategy for \bob\ starting at state $S_2$.
\end{prop}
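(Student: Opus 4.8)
The plan is to prove this by exhibiting an explicit correspondence between plays of the game starting at $S_1$ and plays starting at $S_2$, and checking that this correspondence preserves the property of being a winning position for \bob. First I would unwind the definition of equivalence: by hypothesis there is a color permutation $\sigma:\mathcal G\to\mathcal G$ with $f(S_2)$ equal to $f(S_1)$ with its color row replaced by its $\sigma$-image (the case of literal equality of state representation matrices is the special case $\sigma=\mathrm{id}$). The key observation is that $f(S)$ records exactly the data that matters for the game: the left-to-right order of all endpoints, which endpoints are left and which are right (hence which intervals pairwise intersect, since two intervals intersect iff neither is entirely to the left of the other), and the colors assigned so far. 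The containment restriction and the unit/proper structure depend only on the $s_i$ pattern, not on the metric positions, so any interval \bob\ can legally present in a state equivalent to $S_1$ corresponds to an interval he can legally present in $S_2$ with the "same" insertion position among the existing endpoints.

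Next I would set up the induction. Let \bob's winning strategy from $S_1$ be given; I will describe a strategy from $S_2$ and argue by induction on the number of rounds remaining until \bob\ wins (this is well-founded since a winning strategy forces a win in a bounded number of moves). Maintain the invariant that after each round the current state reached from $S_2$ is equivalent to the corresponding state reached from $S_1$, via a color permutation extending $\sigma$. At a \bob\ move: \bob's strategy from $S_1$ prescribes an interval $x$; since the two current states have the same endpoint-order pattern, there is an order-isomorphic slot for a new interval in the $S_2$-state, and \bob\ presents an interval $x'$ occupying that slot (legality — not contained in nor containing any existing interval, and respecting walls — transfers because it is a statement about the $s_i$-pattern). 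At an \alice\ move: \alice\ assigns $x'$ some color $c'$; this is a legal response in the $S_2$-state, and because the intersection pattern is identical, pulling $c'$ back through the permutation gives a legal response $c$ to $x$ in the $S_1$-state. Feed $c$ to \bob's $S_1$-strategy; update the permutation so the invariant is restored (one checks the updated map is still a bijection on $\mathcal G$ — this uses that \alice's choices are always legal colors, so no conflict is introduced). When \bob's $S_1$-strategy has forced \alice\ to use $g$ (equivalently, a $7$th color), the equivalent state reached from $S_2$ also has $7$ colors in its color row, so \alice\ has been forced to use a $7$th color there as well; since the win condition is "\alice\ uses the color $g$" up to the color-renaming built into equivalence, \bob\ has won from $S_2$.

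The only genuinely delicate point — and the one I would write out carefully — is the bookkeeping of the color permutation across rounds. Equivalence of states is defined up to a \emph{single} global permutation of $\mathcal G$, but as new colors are introduced by \alice\ in the two parallel games they need not be introduced "in the same order," so the permutation witnessing equivalence must be allowed to grow/change from round to round while remaining a bijection and remaining consistent with all previously matched colors. I would phrase the invariant as: there is a bijection $\tau:\mathcal G\to\mathcal G$ (depending on the round) such that the $S_2$-state's matrix is the $S_1$-state's matrix with color row $\tau$-transformed, and $\tau$ agrees with the previous round's permutation on all colors that had already appeared. The verification that such a $\tau$ can always be chosen after an \alice\ move is a short finite check: if \alice's new color in the $S_2$-game already appeared, the intersection-pattern identity forces the $S_1$-game's color to be its $\tau$-preimage and nothing changes; if it is brand new in one game, it must be brand new in the other (same number of distinct colors so far, by the inductive matrix identity), and we extend/adjust $\tau$ on that one pair of colors, which is possible precisely because both are "fresh." Everything else is a routine translation between the pictorial/matrix language and the game rules, so I would keep it terse.
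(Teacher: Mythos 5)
Your argument is correct. Note that the paper itself offers no proof of Proposition~\ref{prop12}: it is stated as an immediate consequence of the fact that the state representation matrix records exactly the game-relevant data (endpoint order, left/right types, hence intersection pattern and clique sizes, plus colors up to renaming). Your simulation argument is the natural formalization of that implicit reasoning, and the two delicate points you flag are handled properly: legality of \bob's moves (the containment rule, the wall rule, and the clique-size bound) depends only on the combinatorial slot, which can always be realized geometrically because every gap between consecutive recorded endpoints contains further real numbers; and the round-by-round bookkeeping of the color bijection $\tau$ is exactly what is needed, since under the paper's convention that a fresh color is always the alphabetically first unused one, the number of distinct used colors agrees in the two parallel plays, so $\tau$ can always be extended on the single fresh pair. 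One could streamline the well-foundedness point (a winning strategy in a finitely branching game wins within a uniformly bounded number of rounds, so inducting on rounds remaining is legitimate, but a plain move-by-move simulation already suffices), but nothing is missing.
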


\begin{figure}
    \centering
    \begin{tabular}{|c|}
    \hline$\bd$\\
    \hline\resizebox{3.4in}{!}{\begin{tikzpicture}

\draw[-|, thick, Red] (-1,0) -- (0,0) node[below, black]{$ a$} -- (2,0);
\draw[-|, thick, OliveGreen] (-1,1) -- (1,1) node[below, black]{$b$} -- (3,1);
\draw[|-, thick, Red] (6,0) -- (9,0) node[below, black]{$a$} -- (10,0);
\draw[|-, thick, OliveGreen] (7,1) -- (9,1) node[below, black]{$b$} -- (10,1);
\draw[|-, thick, Orange] (8,2) -- (9,2) node[below, black]{$d$} -- (10,2);
\draw[|-|, thick, Blue] (2.5,2) -- (4.5,2) node[below, black]{$c$} -- (6.5,2);

\draw[dashed] (-1,-1) -- (-1,3);
\draw[dashed] (10,-1) -- (10,3);

\iffalse
\draw[-|] (-1,0) -- (0,0) node[below]{$a$} -- (2,0);
\draw[-|] (-1,0.5) -- (1,0.5) node[below]{$b$} -- (3,0.5);
\draw[|-] (6,0) -- (9,0) node[below]{$a$} -- (10,0);
\draw[|-] (7,0.5) -- (9,0.5) node[below]{$b$} -- (10,0.5);
\draw[|-] (8,1) -- (9,1) node[below]{$d$} -- (10,1);
\draw[|-|] (2.5,1) -- (4.5,1) node[below]{$c$} -- (6.5,1);

\draw[dashed] (-1,-0.5) -- (-1,1.5);
\draw[dashed] (10,-0.5) -- (10,1.5);
\fi

%\draw[fill=black] (-1.5,-1) rectangle (-0.5,3);
%\draw[fill=black] (9.5,-1) rectangle (10.5,3);
%\draw (4.5,3) node[above] {\texttt{bd}};

\end{tikzpicture}}\\
    \hline\hline$\ed$\\
    \hline\resizebox{3.4in}{!}{\begin{tikzpicture}

\draw[-|, thick, Red] (-1,0) -- (0,0) node[below, black]{$a$} -- (2,0);
\draw[-|, thick, OliveGreen] (-1,1) -- (1,1) node[below, black]{$b$} -- (3,1);
\draw[|-, thick, Red] (6,0) -- (9,0) node[below, black]{$a$} -- (10,0);
\draw[|-, thick, Purple] (7,1) -- (9,1) node[below, black]{$e$} -- (10,1);
\draw[|-, thick, Orange] (8,2) -- (9,2) node[below, black]{$d$} -- (10,2);
\draw[|-|, thick, Blue] (2.5,2) -- (4.5,2) node[below, black]{$c$} -- (6.5,2);

\draw[dashed] (-1,-1) -- (-1,3);
\draw[dashed] (10,-1) -- (10,3);

\end{tikzpicture}}\\
    \hline\hline$\abcax$\\
    \hline\resizebox{3.4in}{!}{\begin{tikzpicture}

\draw[-|, thick, Red] (0,0) -- (2,0) node[below, black]{$a$} -- (4,0);
\draw[|-|, thick, Red] (8,0) -- (10,0) node[below, black]{$a$} -- (12,0);
\draw[|-|, thick, OliveGreen] (3,1) -- (5,1) node[below, black]{$b$} -- (7,1);
\draw[|-|] (11,1) -- (13,1) node[below]{$x\in\{b,c,d\}$} -- (15,1);
\draw[|-|, thick, Blue] (5,2) -- (7,2) node[below, black]{$c$} -- (9,2);

\draw[dashed] (0,-1) -- (0,3);
\draw[dashed] (15.5,-1) -- (15.5,3);
    
\end{tikzpicture}}\\
    \hline\hline$\aab$\\
    \hline\resizebox{3.4in}{!}{\begin{tikzpicture}

\draw[-|, thick, Red] (0,0) -- (2,0) node[below, black]{$a$} -- (4,0);
\draw[|-|, thick, Red] (8,0) -- (10,0) node[below, black]{$a$} -- (12,0);
\draw[|-|, thick, OliveGreen] (11,1) -- (13,1) node[below, black]{$b$} -- (15,1);

\draw[dashed] (0,-1) -- (0,2);
\draw[dashed] (15.5,-1) -- (15.5,2);

\end{tikzpicture}}\\
    \hline\hline$\bab$\\
    \hline\resizebox{3.4in}{!}{\begin{tikzpicture}

\draw[-|, thick, OliveGreen] (0,0) -- (2,0) node[below, black]{$b$} -- (4,0);
\draw[|-|, thick, Red] (8,0) -- (10,0) node[below, black]{$a$} -- (12,0);
\draw[|-|, thick, OliveGreen] (11,1) -- (13,1) node[below, black]{$b$} -- (15,1);

\draw[dashed] (0,-1) -- (0,2);
\draw[dashed] (15.5,-1) -- (15.5,2);

\end{tikzpicture}}\\
    \hline\hline$\abcde$\\
    \hline\resizebox{3.4in}{!}{\begin{tikzpicture}

\draw[-|, thick, OliveGreen] (0,0) -- (2,0) node[below, black]{$b$} -- (4,0);
\draw[|-|, thick, Red] (8,0) -- (10,0) node[below, black]{$a$} -- (12,0);
\draw[|-|, thick, OliveGreen] (11,1) -- (13,1) node[below, black]{$b$} -- (15,1);

\draw[dashed] (0,-1) -- (0,2);
\draw[dashed] (15.5,-1) -- (15.5,2);

\end{tikzpicture}}\\
    \hline
    \end{tabular}
    \caption{States}
    \label{fig:table}
\end{figure}

Given a state $S$, let $S^*$ denote the \emph{dual} of $S$ defined as a state constructed by reversing the order of the columns in the state representation matrix of $S$. The following statement allows us switch between a state and its dual in our strategy.

\begin{prop}\label{prop:reverse}
    Let $S$ be a state. If there is a winning strategy for \bob\ starting at state $S$, then there is a winning strategy for \bob\ starting at state $S^*$.  
\end{prop}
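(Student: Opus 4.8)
The plan is to exploit the left--right reflection symmetry of the game. Define $\phi\colon\mathbb{R}\to\mathbb{R}$ by $\phi(t)=1-t$ and, for a closed interval $I=[a,b]$, set $\phi(I)=[1-b,1-a]$. The map $\phi$ is an order-reversing isometry of the line, and the first step is to record that this makes every rule of the game symmetric under $\phi$: $\phi$ preserves interval lengths and preserves and reverses containment, so it sends a proper interval representation to a proper interval representation and a legal sequence of \bob's moves to a legal sequence of \bob's moves; $\phi$ preserves disjointness and intersection, so it sends proper colorings to proper colorings; for every point $q$ the set of intervals of a configuration containing $q$ is carried bijectively onto the set of image intervals containing $\phi(q)$, so the restriction that no point lies in more than four intervals is preserved; and $\phi$ maps $[0,1]$ onto itself and a shrinking window $[l_i,r_i]$ onto $[1-r_i,1-l_i]$. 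Applying $\phi$ to every interval of a state $S$, leaving all colors unchanged, reverses the left-to-right order of the endpoints and interchanges left and right endpoints, so the resulting state has the column-reversed state representation matrix of $S$, that is, it is $S^*$. Finally $\phi$ is an involution, so $(S^*)^*=S$.

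Now assume \bob\ has a winning strategy $\tau$ starting from $S$. I would build a strategy $\tau^*$ starting from $S^*$ by letting \bob\ run a ``mirror game'' in parallel: he maintains a shadow instance whose state, after each round, is exactly the $\phi$-image of the state of the real instance. Initially the real state is $S^*$ and the shadow state is $\phi(S^*)=(S^*)^*=S$, on which $\tau$ is defined. Whenever $\tau$ prescribes presenting an interval $I$ in the shadow instance, \bob\ presents $\phi(I)$ in the real instance; by the symmetry established above this move is legal in the real instance exactly when $\tau$'s move is legal, and likewise a wall $[l_i,r_i]$ chosen in the real instance corresponds to the wall $[1-r_i,1-l_i]$ in the shadow instance. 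When \alice\ assigns a color $y$ to $\phi(I)$, \bob\ records in the shadow instance that \alice\ assigned $y$ to $I$; this is a legal response there because $\phi$ preserves intersection. A straightforward induction on the number of rounds shows that the invariant ``real state $=\phi$(shadow state)'' persists, so at the round where $\tau$ forces \alice\ to use the color $g$ in the shadow instance she has been forced to use $g$ in the real instance as well. Hence $\tau^*$ is a winning strategy from $S^*$. Proposition~\ref{prop12} is what lets us treat the $\phi$-image of a state as a state in its own right and ignore the particular realization used.

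The substance of the argument --- and the main obstacle --- is the first step: one must verify, one rule at a time, that $\phi$ induces a bijection between \bob's legal continuations from $S$ and his legal continuations from $S^*$. Concretely this means checking invariance of interval length and properness, of the requirement that a newly presented interval neither contains nor is contained in an earlier interval, of the bound of at most four pairwise intersecting intervals, of confinement to the current window, and of the proper-coloring constraint imposed on \alice. Each check is immediate from $\phi$ being an order-reversing isometry, but all of them must be stated; there are no nontrivial computations. The only other thing to be careful about is the bookkeeping that makes the mirror game precise, and here Proposition~\ref{prop12} does the work, since it lets us argue with states up to equivalence so that the choice of reflection (here about $1/2$) is immaterial.
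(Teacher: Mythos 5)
The paper states this proposition without proof, treating it as evident from the left--right symmetry of the game; your reflection argument ($\phi(t)=1-t$ together with a mirror strategy, invoking Proposition~\ref{prop12} to pass between equivalent realizations) is exactly that justification, correctly carried out. The one point worth noting is that reflecting a configuration both reverses the order of the endpoint columns and swaps the $0$/$1$ entries of the first row (left endpoints become right endpoints), whereas the paper's definition of $S^*$ speaks only of reversing the columns; since a literal column reversal never yields the matrix of a realizable state, the reflected state is clearly what is intended, and your identification of $\phi(S)$ with $S^*$ is the right reading.
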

 
To further generalize states to includes walls, we simply ignore endpoints outside of the walls. For example, we say that a state $S$ is a $\bd$ state (shown in Figure \ref{fig:table}) if there exists a permutation $\sigma:\mathcal{G}\rightarrow\mathcal{G}$ such that 
\[
\begin{bmatrix}
    1&0&1&0&1&0&0\\
    \sigma a&\sigma c&\sigma b&\sigma a&\sigma c&\sigma b &\sigma d
\end{bmatrix}
\]
is a submatrix of contiguous columns of $f(S)$. Additionally, we say that a state $S$ is a $\bd^*$ state if there exists a permutation $\sigma:\mathcal{G}\rightarrow\mathcal{G}$ such that 
\[
\begin{bmatrix}
    0&0&1&0&1&0&1\\
    \sigma d&\sigma b&\sigma c&\sigma a&\sigma b&\sigma c &\sigma a
\end{bmatrix}
\] 
is a submatrix of contiguous columns of $f(S^*)$, that is, if $S^*$ is a $\bd$ state. For convenience, we define a $\game$ state as any state that contains all seven colors in $\mathcal{G}$. 

\subsection{Separation strategy}
We generalize the separation strategy used by \cite{eps-lev-05} and provide a proof of the following lemma for completeness.

\begin{lemma}\label{lem}
Let $[\alpha_1,\beta_1]$ and $[\alpha_2,\beta_2]$ be intervals with $\beta_1<\alpha_2$, let $Y$ be a set of colors, and let $k$ be a positive integer. Then for every on-line algorithm there is a strategy for \bob\ to present $k$ intervals so that they will form a set $\{[l_1,r_1],\ldots,[l_k,r_k]\}$ with $\alpha_1<l_1<\ldots<l_k<\beta_1<\alpha_2<r_1<\ldots<r_k<\beta_2$, and there is an integer $j\in\{0,\ldots,k\}$ such that for every $i\in\{1,\ldots,k\}$, \alice\ assigns the interval $[l_i,r_i]$ a color in $Y$ if and only if $i\leq j$.
\end{lemma}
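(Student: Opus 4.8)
The plan is to have \bob\ reveal the $k$ intervals one at a time, maintaining after every round an invariant that records how the colors assigned so far have sorted themselves. At all times \bob\ keeps the intervals presented so far partitioned into a \emph{left block} $B_L$, consisting of the intervals colored from $Y$, and a \emph{right block} $B_R$, consisting of the intervals colored from $\mathcal{G}\setminus Y$. The invariant is: writing out the intervals of $B_L$ followed by those of $B_R$, the left endpoints are strictly increasing and all lie in $(\alpha_1,\beta_1)$, the right endpoints are strictly increasing and all lie in $(\alpha_2,\beta_2)$, and every left (resp.\ right) endpoint coming from $B_L$ is strictly smaller than every left (resp.\ right) endpoint coming from $B_R$. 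Equivalently, there is a nonempty open interval $G_\ell\subseteq(\alpha_1,\beta_1)$ separating the left endpoints of $B_L$ from those of $B_R$, and a nonempty open interval $G_r\subseteq(\alpha_2,\beta_2)$ separating the right endpoints of $B_L$ from those of $B_R$. Before the first round $B_L=B_R=\emptyset$, $G_\ell=(\alpha_1,\beta_1)$, $G_r=(\alpha_2,\beta_2)$, and the invariant holds.

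In each round I would have \bob\ present the interval $x=[l,r]$ with $l$ an interior point of $G_\ell$ and $r$ an interior point of $G_r$; since $l<\beta_1<\alpha_2<r$ this is a legitimate interval. It is also a legal move: every previously presented interval lies in $B_L$, in which case both of its endpoints are strictly below $l$ and strictly below $r$, or in $B_R$, in which case both are strictly above $l$ and strictly above $r$; either way $x$ neither contains nor is contained in it. \alice\ then colors $x$. If the color is in $Y$, \bob\ appends $x$ to $B_L$ as its new rightmost interval; if not, \bob\ prepends $x$ to $B_R$ as its new leftmost interval. Because $l$ and $r$ were taken from the old gaps, $x$'s endpoints sit above all current $B_L$-endpoints and below all current $B_R$-endpoints on both sides, so the staircase ordering and the separation are preserved, and $G_\ell$, $G_r$ each shrink to the nonempty open subinterval lying on the appropriate side of $l$, resp.\ $r$. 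Thus the invariant is restored, and after $k$ rounds exactly $k$ intervals have been presented.

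To finish, relabel these $k$ intervals as $[l_1,r_1],\ldots,[l_k,r_k]$ in increasing order of left endpoint. By the invariant this coincides with increasing order of right endpoint, all $l_i\in(\alpha_1,\beta_1)$ and all $r_i\in(\alpha_2,\beta_2)$, so $\alpha_1<l_1<\ldots<l_k<\beta_1<\alpha_2<r_1<\ldots<r_k<\beta_2$ as required; moreover the first $|B_L|$ of these intervals are precisely those of $B_L$ and the last $k-|B_L|$ precisely those of $B_R$. Taking $j:=|B_L|\in\{0,\ldots,k\}$, the interval $[l_i,r_i]$ was colored from $Y$ if and only if it belongs to $B_L$, i.e.\ if and only if $i\le j$.

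The only real content is the invariant. The step that needs care is verifying that $G_\ell$ and $G_r$ stay nonempty open intervals after every round, including the early rounds when one block is still empty and the relevant gap is bounded on only one side, and that each interval \bob\ presents is a legal (non-nested) move; both reduce to the staircase comparisons above once the invariant is written down precisely. Everything else is immediate from \bob's rule ``move the new interval to the left block on a color in $Y$, and to the right block otherwise.'' If one prefers, \bob\ may additionally set the walls to $[\inf G_\ell,\sup G_r]$ at the end of each round to formalize that the region still in play shrinks monotonically, but this is not needed for the statement.
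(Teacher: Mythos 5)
Your proposal is correct and is essentially the paper's own proof: the paper argues by induction on $k$, inserting the new interval into the gap between the $Y$-colored prefix and the non-$Y$-colored suffix (splitting into the cases $j'=0$, $j'=k-1$, and intermediate $j'$), which is exactly your rule of placing each new interval with endpoints in $G_\ell$ and $G_r$ and then appending it to $B_L$ or prepending it to $B_R$. (One small wording slip: for a previously presented interval in $B_L$ it is its \emph{left} endpoint that lies below $l$ and its \emph{right} endpoint that lies below $r$ --- not both endpoints below $l$ --- but that is precisely the staircase comparison your non-nesting argument actually uses, so the argument stands.)
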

\begin{proof}
Let $[\alpha_1,\beta_1]$ and $[\alpha_2,\beta_2]$ be intervals with $\beta_1<\alpha_2$, and let $Y$ be a set of colors. We argue by induction on the positive integer $k$. If $k=1$, then we present the interval $I=[(\alpha_1+\beta_1)/2,(\alpha_2+\beta_2)/2]$. If \alice\ assigns the interval $I$ a color in $Y$, then $j=1$. Otherwise, $j=0$.

Suppose $k>1$. By the induction hypothesis, there exists a strategy for \bob\ to present $k-1$ intervals so that they will form a set $\{[l'_1,r'_1],\ldots,[l'_{k-1},r'_{k-1}]\}$ with $\alpha_1<l'_1<\ldots<l'_{k-1}<\beta_1<\alpha_2<r'_1<\ldots<r'_{k-1}<\beta_2$ and a integer $j'\in\{0,\ldots,k-1\}$ such that for every $i\in\{1,\ldots,k-1\}$, \alice\ assigns the interval $[l'_i,r'_i]$ a color from $Y$ if and only if $i\leq j'$. If $j'=0$, we introduce the interval $I=[(\alpha_1+l'_1)/2, (\alpha_2+r'_1)/2]$ and define $[l_i,r_i]=[l'_{i-1},r'_{i-1}]$ for $i\in\{2,\ldots,k\}$ and $[l_1,r_1]=I$. If $j'=k-1$, we introduce the interval $I=[(l'_{k-1}+\beta_1)/2, (r'_{k-1}+\beta_2)/2]$ and define $[l_i,r_i]=[l'_i,r'_i]$ for $i\in\{1,\ldots,k-1\}$ and $[l_k,r_k]=I$. Otherwise, we introduce the interval $I=[(l'_{j'}+l'_{j'+1})/2, (r'_{j'}+r'_{j'+1})/2]$ and define $[l_i,r_i]=[l'_i,r'_i]$ for $i\in\{1,\ldots,j\}$, $[l_i,r_i]=[l'_{i-1},r'_{i-1}]$ for $i\in\{j+2,\ldots,k\}$ and $[l_{j+1},r_{j+1}]=I$. If \alice\ assigns the new interval $I$ a color in $Y$, then $j=j'+1$. Otherwise, $j=j'$.
\end{proof}

Lemma \ref{lem} implies that if $S$ is a state with state representation matrix 
\[
    f(S)=\begin{bmatrix}
        s_1&s_2&\ldots&s_{2n}\\
        z_1&z_2&\ldots&z_{2n}\\
    \end{bmatrix},
    %f(S)=(s_1,z_1),\ldots,(s_{2n},z_{2n})
\]
then for any positive integers $p$ and $q$ with $1\leq p<q\leq 2n$,
positive integer $k$ and set of colors $Y\subseteq\mathcal{G}$, 
we can introduce $k$ intervals so that the result is a state $S'$ with state representation matrix
\[
    f(S')=\begin{bmatrix}
        s_1\ldots s_{p-1}&0\ldots 0&s_{p}\ldots s_{q}&1\ldots 1&s_{q+1}\ldots s_{2n}\\
        z_1\ldots z_{p-1}&z'_1\ldots z'_k&z_{p}\ldots z_q&z'_1\ldots z'_k&z_{q+1}\ldots z_{2n}\\
    \end{bmatrix}
    %f(S)=(s_1,z_1),\ldots,(s_{2n},z_{2n})
\]
and there is an integer $j\in\{0,\ldots,k\}$ such that for every $i\in\{1,\ldots,k\}$, $z'_i\in Y$ if and only if $i\leq j$. Note that when $p=1$, the first column of $f(S')$ is $(0,z'_1)$, and when $q=2n$, the last column of $f(S')$ is $(1,z'_k)$.

We use this separation strategy throughout the proof of Theorem \ref{the:main} in the form of \emph{``we present $k$ intervals as shown in Figure $t$ while separating any interval assigned a color in $Y$ to the left''} or equivalently \emph{``we present $k$ intervals as shown in Figure $t$ while separating any interval assigned a color in $\mathcal{G}-Y$ to the right''} with $t$ denoting the Figure that defines $[\alpha_1,\beta_1]$ and $[\alpha_2,\beta_2]$ by the nearest endpoints to the left and to the right of the left and right endpoints of the first new interval, respectively. When $Y=\{y\}$ for some $y\in\mathcal{G}$, we simply state \emph{``assigned the color $y$''} instead of \emph{``assigned a color in $Y$''}.  

\subsection{Outline of proof}
\begin{figure}[h]
    \centering
    \begin{tikzpicture}
        \node[draw=black] (s) at (0,0) {$\varnothing$};
        \node[draw=black] (bab*) at ($(s)-(2,1)$) {$\bab^*$};
        \node[draw=black] (aab*) at ($(s)-(0,1)$) {$\aab^*$};
        \node[draw=black] (bab) at ($(s)-(2,2)$) {$\bab$};
        \node[draw=black] (aab) at ($(s)-(0,2)$) {$\aab$};
        \node[draw=black] (abcde) at ($(s)-(-2,1)$) {$\abcde$};
        \node[draw=black] (gg) at ($(abcde)-(0,4)$) {$\game$};
        \node[draw=black] (abcab) at ($(bab)-(2,1)$) {$\abcab$};
        \node[draw=black] (abcac) at ($(bab)-(0,1)$) {$\abcac$};
        \node[draw=black] (abcad) at ($(bab)-(-2,1)$) {$\abcad$};
        \node[draw=black] (bd) at ($(abcad)-(2,1)$) {$\bd$};
        \node[draw=black] (ed) at ($(abcad)-(0,1)$) {$\ed$};

        \path[->] (s) edge (bab*);
        \path[->] (s) edge (aab*);
        \path[<->] (bab*) edge (bab);
        \path[<->] (aab*) edge (aab);
        \path[->] (s) edge (abcde);
        \path[->] (abcde) edge (aab);
        %\path[->] (abcde) edge (bab);
        \path[->] (abcde) edge (gg);
        \path[->] (aab) edge (abcab);
        \path[->] (aab) edge (abcac);
        \path[->] (aab) edge (abcad);
        \path[->] (bab) edge (abcab);
        \path[->] (bab) edge (abcac);
        \path[->] (bab) edge (abcad);
        \path[->] (abcab) edge (bd);
        \path[->] (abcac) edge (bd);
        \path[->] (abcac) edge (ed);
        \path[->] (abcad) edge (bd);
        \path[->] (abcad) edge (ed);
        \path[->] (abcad) edge (gg);
        \path[->] (bd) edge (gg);
        \path[->] (ed) edge (gg);
        
    \end{tikzpicture}
    \caption{Outline of the proof of Theorem \ref{the:main}.}
    \label{fig:outline}
\end{figure}

In Section \ref{sec:proof}, we prove Theorem \ref{the:main} by showing that there is a strategy for \bob\ which forces a $\game$ state starting from each of the states in Figure \ref{fig:table} and then showing that we can always force at least one of those states or the dual of one of those states. Note that the third state $\abcax$ in Figure \ref{fig:table} contains an interval labeled with an arbitrary color $x\in\{b,c,d\}$. This is to represent three states $\abcab$, $\abcac$ and $\abcad$ where the interval is colored $b$, $c$ and $d$, respectively. That is, a state is an $\abcax$ state if and only if it is an $\abcab$ state, an $\abcac$ state or an $\abcad$ state. 

The structure of the proof is as follows. In Section \ref{sec:bd} and Section \ref{sec:ed}, we show that we can force a $\game$ state starting from any $\bd$ state or from any $\ed$ state, respectively. In Section \ref{sec:abcab}, Section \ref{sec:abcac} and Section \ref{sec:abcad}, we show that that we can force a $\game$ state starting from any $\abcax$ state. In particular, in Section \ref{sec:abcab}, we show that we can force a $\bd$ state starting from any $\abcab$ state, and in Section \ref{sec:abcac}, we show that we can force a $\bd$ or an $\ed$ state starting from any $\abcac$ state. In Section \ref{sec:abcad}, we show that we can force a $\game$ state starting from any $\abcad$. In Section \ref{sec:aborbab}, we show that we can force an $\abcax$ state starting from either any $\aab$ state or any $\bab$ state. In Section \ref{sec:abcde}, we show that we can force a $\game$ state starting from any $\abcde$ state. Finally, in Section \ref{sec:gg}, we show that we can always force either an $\aab^*$ state, a $\bab^*$ state or an $\abcde$ state. The complete outline is summarized in Figure \ref{fig:outline}.

\section{Proof of Main Theorem}
\label{sec:proof}

\subsection{$(\bd\rightarrow\game)$}
\label{sec:bd}

Assume we start in a $\bd$ state. Figure \ref{fig:bdproof} shows how to force a $\game$ state in two phases. In the first phase, we present two intervals as shown in Figure \ref{fig:bdproof} while separating any interval assigned the color $d$ to the left. Let $x$ be the color assigned to the interval on the left and $y$ be the color assigned to the interval on the right. In the second phase, there are two cases. That is, either \alice\ used the color $d$ in the first phase or \alice\ did not use the color $d$ in the first phase. If \alice\ used the color $d$ in the first phase, then $x=d$ and $y=e$. The bottom left frame of Figure \ref{fig:bdproof} shows how to force a $\game$ state in two moves. If neither interval was assigned the color $d$, then both intervals were assigned new colors. Without loss of generality, let us assume that $x=f$ and $y=e$. The bottom right frame of Figure \ref{fig:bdproof} shows how to force a $\game$ state in one move. 

\begin{figure}[h]
    \centering
    \begin{tikzpicture}
        \node[draw=black] (A) at (0,-1.2) {$\bd$};
        \node[draw=black] (B) at (0,-4) {\resizebox{\textwidth/2}{!}{\begin{tikzpicture}

\draw[-|, thick, Red] (-1,0) -- (0,0) node[below, black]{$a$} -- (2,0);
\draw[-|, thick, OliveGreen] (-1,1) -- (1,1) node[below, black]{$b$} -- (3,1);
\draw[|-, thick, Red] (6,0) -- (9,0) node[below, black]{$a$} -- (10,0);
\draw[|-, thick, OliveGreen] (7,1) -- (9,1) node[below, black]{$b$} -- (10,1);
\draw[|-, thick, Orange] (8,2) -- (9,2) node[below, black]{$d$} -- (10,2);
\draw[|-|, thick, Blue] (2.5,2) -- (4.5,2) node[below, black]{$c$} -- (6.5,2);
\draw[very thick, |-|] (0,3) -- (2,3) node[below, black]{$x$} -- (4,3);
\draw[very thick, |-|] (1,4) -- (3,4) node[below, black]{$y\notin\{d\}$} -- (5,4);

\draw[dashed] (-1,-1) -- (-1,5);
\draw[dashed] (10,-1) -- (10,5);

\end{tikzpicture}}};
        %\node[draw=black] (A1) left of (B) {1};
        \node[draw=black] (C) at (-3.2,-8.8) {\resizebox{2.2in}{!}{\begin{tikzpicture}

\draw[-|, thick, Red] (-1,0) -- (0,0) node[below, black]{$a$} -- (2,0);
\draw[-|, thick, OliveGreen] (-1,1) -- (1,1) node[below, black]{$b$} -- (3,1);
\draw[|-, thick, Red] (6,0) -- (9,0) node[below, black]{$a$} -- (10,0);
\draw[|-, thick, OliveGreen] (7,1) -- (9,1) node[below, black]{$b$} -- (10,1);
\draw[|-, thick, Orange] (8,2) -- (9,2) node[below, black]{$d$} -- (10,2);
\draw[|-|, thick, Blue] (2.5,2) -- (4.5,2) node[below, black]{$c$} -- (6.5,2);
\draw[|-|, thick, Orange] (0,3) -- (2,3) node[below, black]{$d$} -- (4,3);
\draw[|-|, thick, Purple] (1,4) -- (3,4) node[below, black]{$e$} -- (5,4);
\draw[|-|, very thick, Cyan] (3.5,5) -- (5.5,5) node[below, black]{$f$} -- (7.5,5);
\draw[|-|, very thick, Magenta] (4.5,6) -- (6.5,6) node[below, black]{$g$} -- (8.5,6);

\draw[dashed] (-1,-1) -- (-1,7);
\draw[dashed] (10,-1) -- (10,7);

\end{tikzpicture}}};
        \node[draw=black] (D) at (3.2,-8.8) {\resizebox{2.2in}{!}{\begin{tikzpicture}

\draw[-|, thick, Red] (-1,0) -- (0,0) node[below, black]{$a$} -- (2,0);
\draw[-|, thick, OliveGreen] (-1,1) -- (1,1) node[below, black]{$b$} -- (3,1);
\draw[|-, thick, Red] (6,0) -- (9,0) node[below, black]{$a$} -- (10,0);
\draw[|-, thick, OliveGreen] (7,1) -- (9,1) node[below, black]{$b$} -- (10,1);
\draw[|-, thick, Orange] (8,2) -- (9,2) node[below, black]{$d$} -- (10,2);
\draw[|-|, thick, Blue] (2.5,2) -- (4.5,2) node[below, black]{$c$} -- (6.5,2);
\draw[|-|, thick, Cyan] (0,3) -- (2,3) node[below, black]{$f$} -- (4,3);
\draw[|-|, thick, Purple] (1,4) -- (3,4) node[below, black]{$e$} -- (5,4);
\draw[very thick, |-|, Magenta] (3.5,5) -- (6,5) node[below, black]{$g$} -- (8.5,5);

\draw[dashed] (-1,-1) -- (-1,7);
\draw[dashed] (10,-1) -- (10,7);

\end{tikzpicture}}};
        \path [->] (A) edge (B);
        \path [->] (B) edge (C);
        \path [->] (B) edge (D);

    \end{tikzpicture}
    \caption{$(\bd\rightarrow\game)$}
    \label{fig:bdproof}
\end{figure}

\subsection{$(\ed\rightarrow\game)$}
\label{sec:ed}

Assume we start in an $\ed$ state. Figure \ref{fig:edproof} shows how to force a $\game$ state in two phases. In the first phase, we present two intervals as shown in Figure \ref{fig:edproof} while separating any interval assigned the color $e$ to the left. Let $y$ denote the color assigned to the right interval. Since $y\neq e$, it must be the case that $y\in\{d,f\}$, and hence, there are two cases in the second phase. If $y=d$, then the bottom left frame of Figure \ref{fig:bdproof} shows how to force a $\game$ state in two moves. If $y=f$, the bottom right frame of Figure \ref{fig:edproof} shows how to force a $\game$ state in one move.

\begin{figure}[h]
    \centering
    \begin{tikzpicture}
        \node[draw=black] (A) at (0,-1.2) {$\ed$};
        \node[draw=black] (B) at (0,-4) {\resizebox{\textwidth/2}{!}{\begin{tikzpicture}

\draw[-|, thick, Red] (-1,0) -- (0,0) node[below, black]{$a$} -- (2,0);
\draw[|-|, thick, OliveGreen] (1,1) -- (3,1) node[below, black]{$b$} -- (5,1);
\draw[|-, thick, Red] (6,0) -- (9,0) node[below, black]{$a$} -- (10,0);
\draw[|-, thick, Purple] (7,1) -- (9,1) node[below, black]{$e$} -- (10,1);
\draw[|-, thick, Orange] (8,2) -- (9,2) node[below, black]{$d$} -- (10,2);
\draw[|-|, thick, Blue] (2.5,2) -- (4.5,2) node[below, black]{$c$} -- (6.5,2);

\draw[very thick, |-|] (-0.9,3) -- (1.1,3) -- (3.1,3);
\draw[very thick, |-|] (0,4) -- (2,4) node[below, black]{$y\notin\{e\}$} -- (4,4);

\draw[dashed] (-1,-1) -- (-1,5);
\draw[dashed] (10,-1) -- (10,5);

\end{tikzpicture}}};
        \node[draw=black] (C) at (-3.2,-8.8) {\resizebox{2.2in}{!}{\begin{tikzpicture}

\draw[-|, thick, Red] (-1,0) -- (0,0) node[below, black]{$a$} -- (2,0);
\draw[|-|, thick, OliveGreen] (1,1) -- (3,1) node[below, black]{$b$} -- (5,1);
\draw[|-, thick, Red] (6,0) -- (9,0) node[below, black]{$a$} -- (10,0);
\draw[|-, thick, Purple] (7,1) -- (9,1) node[below, black]{$e$} -- (10,1);
\draw[|-, thick, Orange] (8,2) -- (9,2) node[below, black]{$d$} -- (10,2);
\draw[|-|, thick, Blue] (2.5,2) -- (4.5,2) node[below, black]{$c$} -- (6.5,2);
\draw[|-|, thick] (-0.9,3) -- (1.1,3) -- (3.1,3);
\draw[|-|, thick, Orange] (0,4) -- (2,4) node[below, black]{$d$} -- (4,4);
\draw[very thick, |-|, Cyan] (3.5,5) -- (5.5,5) node[below, black]{$f$} -- (7.5,5);
\draw[very thick, |-|, Magenta] (4.5,6) -- (6.5,6) node[below, black]{$g$} -- (8.5,6);

\draw[dashed] (-1,-1) -- (-1,7);
\draw[dashed] (10,-1) -- (10,7);

\end{tikzpicture}}};
        \node[draw=black] (D) at (3.2,-8.8) {\resizebox{2.2in}{!}{\begin{tikzpicture}

\draw[-|, thick, Red] (-1,0) -- (0,0) node[below, black]{$a$} -- (2,0);
\draw[|-|, thick, OliveGreen] (1,1) -- (3,1) node[below, black]{$b$} -- (5,1);
\draw[|-, thick, Red] (6,0) -- (9,0) node[below, black]{$a$} -- (10,0);
\draw[|-, thick, Purple] (7,1) -- (9,1) node[below, black]{$e$} -- (10,1);
\draw[|-, thick, Orange] (8,2) -- (9,2) node[below, black]{$d$} -- (10,2);
\draw[|-|, thick, Blue] (2.5,2) -- (4.5,2) node[below, black]{$c$} -- (6.5,2);
\draw[|-|, thick] (-0.9,3) -- (1.1,3) -- (3.1,3);
\draw[|-|, thick, Cyan] (0,4) -- (2,4) node[below, black]{$f$} -- (4,4);
\draw[very thick, |-|, Magenta] (3.5,5) -- (6.5,5) node[below, black]{$g$} -- (8.5,5);

\draw[dashed] (-1,-1) -- (-1,7);
\draw[dashed] (10,-1) -- (10,7);

\end{tikzpicture}}};
        \path [->] (A) edge (B);
        \path [->] (B) edge (C);
        \path [->] (B) edge (D);
    \end{tikzpicture}
    \caption{$\ed\rightarrow\game$}
    \label{fig:edproof}
\end{figure}

\subsection{$(\abcab\rightarrow\bd)$}
\label{sec:abcab}

Assume we start in an $\abcab$ state. Figure \ref{fig:abcabproof} shows how to force a $\bd$ state in at most two moves. We present two intervals and update the right wall as shown in Figure \ref{fig:abcabproof} while separating any interval that was assigned the color $c$ to the right. Let $x$ denote the color that \alice\ assigned to the left interval. Since neither interval could have been assigned the colors $a$ or $b$, the left interval must have been colored with a new color. Without loss of generality, we may assume that $x=d$, resulting in a $\bd$ state. 

\subsection{$(\abcac\rightarrow\bd\lor\ed)$}
\label{sec:abcac}

Assume we start in an $\abcac$ state. Figure \ref{fig:abcacproof} shows how we can force either a $\bd$ state or an $\ed$ state in at most two moves. We present two intervals and update the right wall as shown in Figure \ref{fig:abcacproof} while separating any interval assigned the color $b$ to the left. Let $x$ denote the color assigned to the left interval and $y$ denote the color assigned to the right interval. If \alice\ assigned the color $b$ to either of the two intervals, then $x=b$ and $y=d$ which results in a $\bd$ state. If \alice\ did not assign the color $b$ to either interval, then both intervals were assigned a new color. Without loss of generality, we may assume $x=e$ and $y=d$ resulting in an $\ed$ state. 

\begin{figure}[h]
    \centering
    \begin{tikzpicture}
        \node[draw=black] (abcab) at (0,-0.5) {$\abcab$};
        \node[draw=black] (abcabd) at (0,-3) {\resizebox{4in}{!}{\begin{tikzpicture}

\draw[-|, thick, Red] (0,0) -- (2,0) node[below, black]{$a$} -- (4,0);
\draw[|-|, thick, Red] (9,0) -- (11,0) node[below, black]{$a$} -- (13,0);
\draw[|-|, thick, OliveGreen] (3,1) -- (5,1) node[below, black]{$b$} -- (7,1);
\draw[|-|, thick, OliveGreen] (10,1) -- (12,1) node[below, black]{$b$} -- (14,1);
\draw[|-|, thick, Blue] (5.5,2) -- (7.5,2) node[below, black]{$c$} -- (9.5,2);
\draw[very thick, |-|, Orange] (11,2) -- (13,2) node[below, black]{$x\in\{d,e\}$} -- (15,2);
\draw[very thick, |-|] (12,3) -- (14,3) -- (16,3);

\draw[dashed] (0,-1) -- (0,4);
\draw[very thick, dashed] (11.5,-1) -- (11.5,4);
    
\end{tikzpicture}}};
        \node (bd) at (-1,-5.5) {\scriptsize$x=d$};
        \node (bd2) at (1,-5.5) {\scriptsize$x=e$};
        \node[draw=black] (b) at (0,-6.5) {$\bd$};

        \draw[->] (abcab) edge (abcabd);
        \draw[-] (abcabd) edge (bd);
        \draw[-] (abcabd) edge (bd2);
        \draw[->] (bd) edge (b);
        \draw[->] (bd2) edge (b);

    \end{tikzpicture}
    \caption{$\abcab\rightarrow\bd$}
    \label{fig:abcabproof}
\end{figure}

\begin{figure}[h!]
    \centering
    \begin{tikzpicture}
        \node[draw=black] (abcac) at (0,-0.5) {$\abcac$};
        \node[draw=black] (abcac-xy) at (0,-3) {\resizebox{4in}{!}{\begin{tikzpicture}

\draw[-|, thick, Red] (0,0) -- (2,0) node[below, black]{$a$} -- (4,0);
\draw[|-|, thick, Red] (8,0) -- (10,0) node[below, black]{$a$} -- (12,0);
\draw[|-|, thick, OliveGreen] (3,1) -- (5,1) node[below, black]{$b$} -- (7,1);
\draw[|-|, thick, Blue] (11,1) -- (13,1) node[below, black]{$c$} -- (15,1);
\draw[|-|, thick, Blue] (4.5,2) -- (6.5,2) node[below, black]{$c$} -- (8.5,2);
\draw[very thick, |-|] (9,2) -- (11,2) node[below, black]{$x$} -- (13,2);
\draw[very thick, |-|] (10,3) -- (12,3) node[below, black]{$y\notin\{b\}$} -- (14,3);

\draw[dashed] (0,-1) -- (0,4);
\draw[very thick, dashed] (10.5,-1) -- (10.5,4);
    
\end{tikzpicture}}};
        \node (abcac-bd) at (-4,-6) {\scriptsize$x=b,y=d$};
        %\node (abcac-ed) at (0,-6) {\scriptsize$x=e,y=d$};
        \node (abcac-de) at (4,-6) {\scriptsize$x=e,y=d$};
        \node[draw=black] (bd) at (-4,-7) {$\bd$};
        \node[draw=black] (ed) at (4,-7) {$\ed$};

        \draw[->] (abcac) edge (abcac-xy);
        \draw[-] (abcac-xy) edge (abcac-bd);
        %\draw[-] (abcac-xy) edge (abcac-ed);
        \draw[-] (abcac-xy) edge (abcac-de);
        %\draw[->] (abcac-ed) edge (abcac-de);
        \draw[->] (abcac-bd) edge (bd);
        %\draw[->] (abcac-ed) edge (ed);
        \draw[->] (abcac-de) edge (ed);
    \end{tikzpicture}
    \caption{$\abcac\rightarrow\bd\lor\ed$}
    \label{fig:abcacproof}
\end{figure}

\subsection{$\abcad\rightarrow\game$}\label{sect:abcad}
\label{sec:abcad}

\begin{figure}
    \centering
    \begin{tikzpicture}
        \node[draw=black] (abcad) at (0,2) {$\abcad$};
        \node[draw=black] (abcadx) at (0,0) {\resizebox{4.5in}{!}{\begin{tikzpicture}

\draw[-|, thick, Red] (0,0) -- (2,0) node[below, black]{$a$} -- (4,0);
\draw[|-|, thick, Red] (8.5,0) -- (10.5,0) node[below, black]{$a$} -- (12.5,0);
\draw[|-|, thick, OliveGreen] (3,1) -- (5,1) node[below, black]{$b$} -- (7,1);
\draw[|-|, thick, Orange] (11,1) -- (13,1) node[below, black]{$d$} -- (15,1);
\draw[|-|, thick, Blue] (5,2) -- (7,2) node[below, black]{$c$} -- (9,2);
\draw[very thick, |-|] (12,2) -- (14,2) node[below, black]{$x\in\{b,c,e\}$} -- (16,2);

\draw[dashed] (0,-1) -- (0,3);
\draw[dashed] (17,-1) -- (17,3);
    
\end{tikzpicture}}};
        \node (abcade) at (-4,-2) {$x=e$};
        \node[draw=black] (ed1) at ($(abcade)+(0,-1.5)$) {$\ed$};
        \node (abcadc) at (4,-2) {$x=c$};
        \node[draw=black] (blank1) at ($(abcadc) + (0,-0.75)$) {}; 
        \node[draw=black] (ed2) at ($(blank1)+(-0.5,-0.75)$) {$\ed$};
        \node[draw=black] (bd2) at ($(blank1)+(0.5,-0.75)$) {$\bd$};
        \node (abcadb) at (0,-2) {$x=b$};
        \node[draw=black] (abcadbx) at ($(abcadb)+(0,-4)$) {\resizebox{4.5in}{!}{\begin{tikzpicture}

\draw[-|, thick, Red] (0,0) -- (2,0) node[below, black]{$a$} -- (4,0);
\draw[|-|, thick, Red] (8.5,0) -- (10.5,0) node[below, black]{$a$} -- (12.5,0);
\draw[|-|, thick, OliveGreen] (3,1) -- (5,1) node[below, black]{$b$} -- (7,1);
\draw[|-|, thick, Orange] (11,1) -- (13,1) node[below, black]{$d$} -- (15,1);
\draw[|-|, thick, Blue] (5,2) -- (7,2) node[below, black]{$c$} -- (9,2);
\draw[|-|, thick, OliveGreen] (12,2) -- (14,2) node[below, black]{$b$} -- (16,2);
\draw[very thick, |-|] (14,3) -- (16,3) -- (18,3);
\draw[very thick, |-|] (13,4) -- (15,4) node[below, black]{$y\in\{c,e\}$} -- (17,4);

\draw[dashed] (0,-1) -- (0,5);
\draw[dashed] (18.5,-1) -- (18.5,5);
    
\end{tikzpicture}}};
        \node (abcadbc) at (-4,-8.5) {$y=c$};
        \node[draw=black] (blank2) at ($(abcadbc) + (0,-0.75)$) {};
        \node[draw=black] (ed3) at ($(blank2)+(0,-0.75)$) {$\ed$};
        \node (abcadbe) at (0,-8.5) {$y=e$};
        \node[draw=black] (abcadbex) at ($(abcadbe)+(0,-4)$) {\resizebox{4.5in}{!}{\begin{tikzpicture}

\draw[-|, thick, Red] (0,0) -- (2,0) node[below, black]{$a$} -- (4,0);
\draw[|-|, thick, Red] (8.5,0) -- (10.5,0) node[below, black]{$a$} -- (12.5,0);
\draw[|-|, thick, OliveGreen] (3,1) -- (5,1) node[below, black]{$b$} -- (7,1);
\draw[|-|, thick, Orange] (11,1) -- (13,1) node[below, black]{$d$} -- (15,1);
\draw[|-|, thick, Blue] (5,2) -- (7,2) node[below, black]{$c$} -- (9,2);
\draw[|-|, thick, OliveGreen] (12,2) -- (14,2) node[below, black]{$b$} -- (16,2);
\draw[|-|] (14,3) -- (18,3);
\draw[|-|, thick, Purple] (13,4) -- (15,4) node[below, black]{$e$} -- (17,4);
\draw[very thick, |-|] (2,3) -- (4,3) -- (6,3);
\draw[very thick,|-|] (2.5,4) -- (4.5,4) node[below, black]{$z\in\{e,f\}$} -- (6.5,4);

\draw[dashed] (0,-1) -- (0,5);
\draw[dashed] (18.5,-1) -- (18.5,5);
    
\end{tikzpicture}}};
        \node (abcadbee) at (-4,-15) {$z=e$};
        \node (abcadbef) at (4,-15) {$z=f$};
        \node (vd1) at ($(abcadbee)+(0,-0.85)$) {$\vdots$};
        \node (vd2) at ($(abcadbef)+(0,-0.85)$) {$\vdots$};

        \path [->] (abcad) edge (abcadx);
        \path [-] (abcadx) edge (abcade);
        \path [-] (abcadx) edge (abcadc);
        \path [-] (abcadx) edge (abcadb);
        \path [->] (abcadc) edge (blank1);
        \path [->] (abcadb) edge (abcadbx);
        \path [-] (abcadbx) edge (abcadbc);
        \path [->] (abcadbc) edge (blank2);
        \path [->] (blank2) edge (ed3);
        \path [->] (abcade) edge (ed1);
        \path [->] (blank1) edge (ed2);
        \path [->] (blank1) edge (bd2);
        \path [-] (abcadbx) edge (abcadbe);
        \path [->] (abcadbe) edge (abcadbex);
        \path [-] (abcadbex) edge (abcadbee);
        \path [-] (abcadbex) edge (abcadbef);
        \path [-] (abcadbee) edge (vd1);
        \path [-] (abcadbef) edge (vd2);
        
    \end{tikzpicture}
    \caption{$\abcad\rightarrow\game$}
    \label{fig:abcad-g}
\end{figure}

\begin{figure}
    \centering
    \begin{tikzpicture}
        \node (abcadbe-e) at (0,0) {$z=e$};
        \node (d1) at ($(abcadbe-e)+(0,1)$) {$\vdots$};
        \path (abcadbe-e) edge (d1);
        \node[draw=black] (g) at (0,-3.5) {\resizebox{4.5in}{!}{\begin{tikzpicture}

\draw[-|, thick, Red] (0,0) -- (2,0) node[below, black]{$a$} -- (4,0);
\draw[|-|, thick, Red] (8.5,0) -- (10.5,0) node[below, black]{$a$} -- (12.5,0);
\draw[|-|, thick, OliveGreen] (3,1) -- (5,1) node[below, black]{$b$} -- (7,1);
\draw[|-|, thick, Orange] (11,1) -- (13,1) node[below, black]{$d$} -- (15,1);
\draw[|-|, thick, Blue] (5,2) -- (7,2) node[below, black]{$c$} -- (9,2);
\draw[|-|, thick, OliveGreen] (12,2) -- (14,2) node[below, black]{$b$} -- (16,2);
\draw[|-|] (14,3) -- (18,3);
\draw[|-|, thick, Purple] (13,4) -- (15,4) node[below, black]{$e$} -- (17,4);
\draw[|-|] (2,3) -- (6,3);
\draw[|-|, thick, Purple] (2.5,4) -- (4.5,4) node[below, black]{$e$} -- (6.5,4);
\draw[very thick, |-|, Cyan] (6.4,5) -- (8.8,5) node[below, black]{$f$} -- (11.2,5);
\draw[very thick, |-|, Magenta] (8.75,6) -- (11.25,6) node[below, black]{$g$} -- (13.75,6);

\draw[dashed] (0,-1) -- (0,7);
\draw[dashed] (18.5,-1) -- (18.5,7);
    
\end{tikzpicture}}};
        \draw[->] (abcadbe-e) edge (g);

        \node (abcadbef) at (6,0) {$z=f$};
        \node (d2) at ($(abcadbef)+(0,1)$) {$\vdots$};
        \path (abcadbef) edge (d2);
        \node[draw=black] (g2) at (0,-12) {\resizebox{4.6in}{!}{\begin{tikzpicture}

\draw[-|, thick, Red] (0,0) -- (2,0) node[below, black]{$a$} -- (4,0);
\draw[|-|, thick, Red] (8.5,0) -- (10.5,0) node[below, black]{$a$} -- (12.5,0);
\draw[|-|, thick, OliveGreen] (3,1) -- (5,1) node[below, black]{$b$} -- (7,1);
\draw[|-|, thick, Orange] (11,1) -- (13,1) node[below, black]{$d$} -- (15,1);
\draw[|-|, thick, Blue] (5,2) -- (7,2) node[below, black]{$c$} -- (9,2);
\draw[|-|, thick, OliveGreen] (12,2) -- (14,2) node[below, black]{$b$} -- (16,2);
\draw[|-|] (14,3) -- (18,3);
\draw[|-|, thick, Purple] (13,4) -- (15,4) node[below, black]{$e$} -- (17,4);
\draw[|-|] (2,3) -- (6,3);
\draw[|-|, thick, Cyan] (2.5,4) -- (4.5,4) node[below, black]{$f$} -- (6.5,4);
\draw[very thick, |-|] (6.75,4) -- (9.25,4) node[below, black]{$w\in\{e,f\}$} -- (11.5,4);
\draw (0,-1) -- (18.5,-1);
\draw[very thick, |-|, Magenta] (6.25,-2) -- (8.75,-2) node[below, black]{$g$} -- (11.25,-2);
\draw (0,-3) -- (18.5,-3);
\draw[very thick, |-|, Magenta] (8.75,-4) -- (11.25,-4) node[below, black]{$g$} -- (13.75,-4);

\node[anchor = north west] at (0,-1) (oh) {If $w=e$:};
\node[anchor = north west] at (0,-3) (oh2) {If $w=f$:};

\draw[dashed] (0,-5) -- (0,5);
\draw[dashed] (18.5,-5) -- (18.5,5);
    
\end{tikzpicture}}};
        \path[-] (abcadbef) edge (6,-7);
        \path[-] (6,-7) edge (0,-8);
        \path[->] (0,-8) edge (g2);

    \end{tikzpicture}
    \caption{$\abcad\rightarrow\game$ (continued.)}
    \label{fig:abcad-g2}
\end{figure}

Assume we start in an $\abcad$ state. We present one interval as in Figure \ref{fig:abcad-g}. Let $x$ denote the color assigned to the interval by \alice. Then, $x\in\{b,c,e\}$. If $x=e$, then the result is an $\ed$ state. If $x=c$, then notice that if we in introduce a new interval between the interval colored $d$ and the interval colored $x=e$, then updating the right wall to exclude only the interval colored $x=e$ results in a either a $\bd$ state or an $\ed$ state. Hence, we may assume that $x=b$. 

We present two intervals as shown in Figure \ref{fig:abcad-g} while separating any interval assigned the color $a$ to the right. Let $y$ denote the color assigned to the left interval by \alice. Then, $y\in\{c,e,f\}$. If $y=c$, then notice that if we introduce a new interval $[l,r]$ with $l$ immediately to the left of the left endpoint of the interval colored $d$ and $r$ immediately to the right of the left endpoint of the interval colored $y=c$, then updating the right wall to be immediately to the right of the left endpoint of the interval colored $d$ results in an $\ed$ state. Hence, we may assume that the interval was colored a new color. Without loss of generality, let us assume that $y=e$. 

We present two intervals as shown in Figure \ref{fig:abcad-g} while separating any interval assigned the color  $d$ to the left. Let $z$ denote the color that was assigned to the right interval by \alice. Then $z\in\{e,f\}$. If $z=e$, then Figure \ref{fig:abcad-g2} shows how to force colors $f$ and $g$ in two moves. Therefore, we may assume that $z=f$. We present an interval as shown in Figure \ref{fig:abcad-g2}. Let $w$ denote the color assigned to the interval by \alice. Then, $w\in\{e,f\}$. Regardless of the choice of color by \alice, we can force the color $g$ in one move. Figure \ref{fig:abcad-g2} shows how to force $g$ in one move for each case. Thus, we have shown that we can force a $\game$ state from any $\abcad$ state.

\subsection{$(\aab\lor\bab\rightarrow\abcax)$}
\label{sec:aborbab}

\begin{figure}
    \centering
    \begin{tikzpicture}
        \node[draw=black] (aab) at (0,-2) {$\aab$};
        \node[draw=black] (aab-xy) at (0,-4) {\resizebox{4in}{!}{\begin{tikzpicture}

\draw[-|, thick, Red] (0,0) -- (2,0) node[below, black]{$a$} -- (4,0);
\draw[|-|, thick, Red] (8,0) -- (10,0) node[below, black]{$a$} -- (12,0);

\draw[|-|, thick, OliveGreen] (11,1) -- (13,1) node[below, black]{$b$} -- (15,1);

\draw[very thick, |-|] (2,1) -- (4,1) node[below, black]{$x$} -- (6,1);
\draw[very thick, |-|] (5,2) -- (7,2) node[below, black]{$y$} -- (9,2);

\draw[dashed] (0,-1) -- (0,3);
\draw[dashed] (15.5,-1) -- (15.5,3);

\end{tikzpicture}}};
        \node (aab-bc) at (-4.5,-6.5) {\scriptsize$x=b,y=c$};
        \node (aab-cb) at (-1.5,-6.5) {\scriptsize$x=c,y=b$};
        \node (aab-cd) at (1.5,-6.5) {\scriptsize$x=c,y=d$};
        \node (aab-dc) at (4.5,-6.5) {\scriptsize$x=d,y=c$};
        \node[draw=black] (abcab) at (-4.5,-8) {$\abcab$};
        \node[draw=black] (abcac) at (-1.5,-8) {$\abcac$};
        \node[draw=black] (abcad) at (3,-8) {$\abcad$};

        \path [->] (aab) edge (aab-xy);
        \path [-] (aab-xy) edge (aab-bc);
        \path [-] (aab-xy) edge (aab-cb);
        \path [-] (aab-xy) edge (aab-cd);
        \path [-] (aab-xy) edge (aab-dc);
        \path [->] (aab-bc) edge (abcab);
        \path [->] (aab-cb) edge (abcac);
        \path [->] (aab-cd) edge (abcad);
        \path [->] (aab-dc) edge (abcad);
    \end{tikzpicture}
    \caption{$\aab\rightarrow\abcax$}
    \label{fig:aab-proof}
\end{figure}

\begin{figure}
    \centering
    \begin{tikzpicture}
        \node[draw=black] (bab) at (0,-2) {$\bab$};
        \node[draw=black] (bab-cx) at (0,-4) {\resizebox{4in}{!}{\begin{tikzpicture}

\draw[-|, thick, OliveGreen] (2,0) -- (4,0) node[below, black]{$b$} -- (6,0);
\draw[|-|, thick, Red] (8,0) -- (10,0) node[below, black]{$a$} -- (12,0);

\draw[|-|, thick, OliveGreen] (11,1) -- (13,1) node[below, black]{$b$} -- (15,1);

\draw[very thick, |-|] (5,1) node[left]{1} -- (7,1) node[below, black]{$c$} -- (9,1);
\draw[very thick, |-|] (14,2) node[left]{2} -- (16,2) node[below, black]{$x\in\{a,c,d\}$} -- (18,2);

\draw[dashed] (2,-1) -- (2,3);
\draw[dashed] (18.5,-1) -- (18.5,3);

\end{tikzpicture}}};
        \node (bab-ca) at (-4,-6) {\scriptsize$x=a$};
        \node (bab-cc) at (0,-6) {\scriptsize$x=c$};
        \node (bab-cd) at (4,-6) {\scriptsize$x=d$};
        \node[draw=black] (abcac) at (-4,-7) {$\abcac$};
        \node[draw=black] (abcab) at (0,-7) {$\abcab$};
        \node[draw=black] (abcad) at (4,-7) {$\abcad$};

        \path [->] (bab) edge (bab-cx);
        \path [-] (bab-cx) edge (bab-ca);
        \path [-] (bab-cx) edge (bab-cc);
        \path [-] (bab-cx) edge (bab-cd);
        \path [->] (bab-ca) edge (abcac);
        \path [->] (bab-cc) edge (abcab);
        \path [->] (bab-cd) edge (abcad);
    \end{tikzpicture}
    \caption{$\bab\rightarrow\abcax$}
    \label{fig:bab-proof}
\end{figure}

Assume we start in an $\aab$ state. Figure \ref{fig:aab-proof} shows how to force an $\abcax$ state from an $\aab$ state. We present two intervals as shown in Figure \ref{fig:aab-proof}. The possibilities for the two colors are $(b,c)$, $(c,b)$, $(c,d)$ and $(d,c)$. If the intervals are colored $b$ and $c$, then the result is an $\abcab$ state, If the intervals are colored $c$ and $b$, then the result is an $\abcac$ state, and if the intervals are colored $c$ and $d$, or $d$ and $c$, then the result is an $\abcad$ state. Hence, we can force an $\abcax$ state from any $\aab$ state. 

Assume we start in a $\bab$ state. Figure \ref{fig:bab-proof} shows how to force an $\abcax$ state from a $\bab$ state. We present two intervals as shown in Figure \ref{fig:bab-proof}. The first interval on the left must be colored $c$. The possible colors for the second interval are $a$, $c$ and $d$. If the interval is colored $a$, then the result is an $\abcac$ state, if the interval is colored $c$, then the result is an $\abcab$ state, and if the interval is colored $d$, then the result is an $\abcad$ state. Hence, we can force an $\abcax$ state from any $\bab$ state. Since we can force $g$ from any $\abcax$ state, we can force a $\game$ state from any $\aab$ state and from any $\bab$ state.

\subsection{$(\abcde\rightarrow\game)$}
\label{sec:abcde}

\begin{figure}
    \centering
    \begin{tikzpicture}
        \node[draw=black] (abcde) at (0,0) {$\abcde$};
        \node[draw=black] (abcde-x) at (0,-3) {\resizebox{4.5in}{!}{\begin{tikzpicture}

\draw[|-|, thick, Red] (0,0) -- (2,0) node[below, black]{$a$} -- (4,0);
\draw[|-, thick, Blue] (6,0) -- (9,0) node[below, black]{$c$} -- (10,0);
\draw[|-|, thick, OliveGreen] (1,1) -- (3,1) node[below, black]{$b$} -- (5,1);
\draw[|-, thick, Orange] (7,1) -- (9,1) node[below, black]{$d$} -- (10,1);
\draw[|-, thick, Purple] (8,2) -- (9,2) node[below, black]{$e$} -- (10,2);
\draw[very thick, |-|] (-6,0) -- (-2,0);
\draw[very thick, |-|] (-5.5,1) -- (-1.5,1);
\draw[very thick, |-|] (-5,2) -- (-1,2);

\draw[very thick, |-|] (-4.5,3) -- (-2.5,3) node[below, black]{$x\in\{a,c,d\}$} -- (-0.5,3);

\draw[dashed] (10,-1) -- (10,4);
\draw[dashed] (-6.5,-1) -- (-6.5,4);
\draw[very thick, dashed] (-0.75,-1) -- (-0.75,4);
\path[thick, ->] (-6.25,-1) edge (-1,-1);

\end{tikzpicture}}};
        \node (abcde-c) at (-4,-6) {$x=c$};
        \node (abcde-d) at (0,-6) {$x=d$};
        \node (abcde-a) at (4,-6) {$x=a$};
        \node[draw=black] (aab) at (5,-9) {$\aab$};
        \node[draw=black] (abcde-x-y) at (-1.8,-9) {\resizebox{3.5in}{!}{\begin{tikzpicture}

\draw[|-|, thick, Red] (0,0) -- (2,0) node[below, black]{$a$} -- (4,0);
\draw[|-, thick, Blue] (6,0) -- (9,0) node[below, black]{$c$} -- (10,0);
\draw[|-|, thick, OliveGreen] (1,1) -- (3,1) node[below, black]{$b$} -- (5,1);
\draw[|-, thick, Orange] (7,1) -- (9,1) node[below, black]{$d$} -- (10,1);
\draw[|-, thick, Purple] (8,2) -- (9,2) node[below, black]{$e$} -- (10,2);

\draw[-|, thick, Blue] (-2.5,1) -- (-1.5,1) node[below, black]{$x\in\{c,d\}$} -- (-0.5,1);
\draw[very thick, |-|] (-2,2) -- (2,2);
\draw[very thick, |-|] (-1,3) -- (1,3) node[below, black]{$y\in\{e,f\}$} -- (3,3);

\draw[dashed] (10,-1) -- (10,4);
\draw[dashed] (-2.5,-1) -- (-2.5,4);
\draw[very thick, dashed] (2.5,-1) -- (2.5,4);
\path[thick, ->] (-2.25,-1) edge (2.25,-1);

\end{tikzpicture}}};
        \node (hi) at (-3,-12) {$y=e$};
        \node[draw=black] (abcde-x-e) at (-3.2,-15.5) {\resizebox{2.25in}{!}{\begin{tikzpicture}

\draw[-|, thick, Red] (2,0) -- (3,0) node[below, black]{$a$} -- (4,0);
\draw[|-, thick, Blue] (6,0) -- (9,0) node[below, black]{$c$} -- (10,0);
\draw[-|, thick, OliveGreen] (2,1) -- (3,1) node[below, black]{$b$} -- (5,1);
\draw[|-, thick, Orange] (7,1) -- (9,1) node[below, black]{$d$} -- (10,1);
\draw[|-, thick, Purple] (8,2) -- (9,2) node[below, black]{$e$} -- (10,2);

\draw[-|, thick, Purple] (2,2) -- (3,2) node[below, black]{$e$} -- (3.5,2);

\draw[very thick, |-|] (3,3) -- (5.25,3) node[below, black]{$f$} -- (7.5,3);
\draw[very thick, |-|] (3.75,4) -- (6.75,4) node[below, black]{$g$} -- (9.75,4);

\draw[dashed] (10,-1) -- (10,5);
\draw[dashed] (2,-1) -- (2,5);

\end{tikzpicture}}};
        \node (hi2) at (3,-12) {$y=f$};
        \node[draw=black] (abcde-x-f) at (3.2,-15.5) {\resizebox{2.25in}{!}{\begin{tikzpicture}

\draw[-|, thick, Red] (2,0) -- (3,0) node[below, black]{$a$} -- (4,0);
\draw[|-, thick, Blue] (6,0) -- (9,0) node[below, black]{$c$} -- (10,0);
\draw[-|, thick, OliveGreen] (2,1) -- (3,1) node[below, black]{$b$} -- (5,1);
\draw[|-, thick, Orange] (7,1) -- (9,1) node[below, black]{$d$} -- (10,1);
\draw[|-, thick, Purple] (8,2) -- (9,2) node[below, black]{$e$} -- (10,2);
%\draw[-|, thick, OliveGreen] (-2,2) -- (0,2) node[below, black]{$y$} -- (2,2);
\draw[-|, thick, Cyan] (2,2) -- (3,2) node[below, black]{$f$} -- (3.5,2);

\draw[very thick, |-|] (3,3) -- (6,3) node[below, black]{$g$} -- (9,3);

\draw[dashed] (10,-1) -- (10,4);
\draw[dashed] (2,-1) -- (2,4);

\end{tikzpicture}}};

        \path [->] (abcde) edge (abcde-x);
        \path [-] (abcde-x) edge (abcde-a);
        \path [->] (abcde-a) edge (aab);
        \path [-] (abcde-x) edge (abcde-d);
        \path [-] (abcde-x) edge (abcde-c);
        \path [->] (abcde-d) edge (abcde-x-y);
        \path [->] (abcde-c) edge (abcde-x-y);
        \path [-] (abcde-x-y) edge (hi);
        \path [->] (hi) edge (abcde-x-e);
        \path [-] (abcde-x-y) edge (hi2);
        \path [->] (hi2) edge (abcde-x-f);

    \end{tikzpicture}
    \caption{$\abcde\rightarrow\aab\lor\game$}
    \label{fig:abcde-proof}
\end{figure}

Assume we start in an $\abcde$ state. Figure \ref{fig:abcde-proof} shows how to force a $\game$ state in three phases. In the first phase, we present four intervals while separating any interval assigned a color in $\{b,e,f\}$ to the left. The right-most interval must be colored either $a$, $c$ or $d$. If the interval is colored $a$, then the result is an $\aab$ state. Hence, we may assume that the interval was colored $c$ or $d$. In the second phase, we present two intervals while separating any interval assigned a color in $\{e,f\}$ to the right. It is easy to see that in order to avoid using $g$, the interval on the right must be colored either $e$ or $f$. The bottom two frames of Figure \ref{fig:abcde-proof} show how to force a $\game$ state for each of the two cases.

\subsection{$(\game)$}
\label{sec:gg}

\begin{figure}[h]
    \centering
    \begin{tikzpicture}

\draw[|-|, thick, OliveGreen] (-2,1) node[left,black]{$2$} -- (0,1) node[below, black]{$b$} -- (2,1);
\draw[|-|, thick, Red] (-1,0) node[left,black]{$1$} -- (1,0) node[below, black]{$a$} -- (3,0);
\draw[|-|, thick, Blue] (7,0) node[left,black]{$3$} -- (9,0) node[below, black]{$c$} -- (11,0);
\draw[|-|, thick, Orange] (6,1) node[left,black]{$4$} -- (8,1) node[below, black]{$d$} -- (10,1);
\draw[|-|, thick, Purple] (5,2) node[left,black]{$5$} -- (7,2) node[below, black]{$e$} -- (9,2);

\end{tikzpicture}
    \caption{$\aab^*\lor\bab^*\lor\abcde$}
    \label{fig:gg}
\end{figure}

Finally, we show that we can always force an $\aab^*$ state, a $\bab^*$ state or an $\abcde$ state. To show this, we simply introduce intervals in the order shown in Figure \ref{fig:gg}. Intervals 1 and 2 must be assigned colors $a$ and $b$, respectively. When presented, intervals 3, 4 and 5 cannot be assigned the color $a$ since the result would be an $\aab^*$ state, nor the color $b$ since the result would be a $\bab^*$ state (which can be seen by updating the right wall to be immediately to the right of the left endpoint of the newest interval). Thus, the result after presenting all five intervals is an $\abcde$ state. This concludes the proof.

%\nocite{*}
\bibliographystyle{abbrvnat}
\bibliography{semi}
\label{sec:biblio}

\end{document}